\pgfplotsset{compat=1.11}
\newlength\fwidth
\title{Spectral equivalence of unsymmetric kernel matrices and applications}
\author[1,2]{Tizian Wenzel\thanks{wenzel@math.lmu.de}}
\author[2]{Armin Iske}
\affil[1]{Department of Mathematics, Ludwig Maximilian University of Munich (Munich, Germany)}
\affil[2]{Department of Mathematics, Universität Hamburg (Hamburg, Germany)}
\newif\iflong			
\begin{document}

\maketitle %
  
\begin{abstract}
Symmetric kernel matrices are a well-researched topic in the literature of kernel based approximation.
In particular stability properties in terms of lower bounds on the smallest eigenvalue of such symmetric kernel matrices are thoroughly investigated, as they play a fundamental role in theory and practice.

%
In this work, we focus on unsymmetric kernel matrices 
and derive stability properties under small shifts 
by establishing a spectral equivalence to their unshifted, symmetric versions. 
This extends and generalizes results for translational invariant kernels upon Quak et al [SIAM Journal on Math.\ Analysis, 1993] and Sivakumar and Ward [Numerische Mathematik, 1993], 
however focusing instead on finitely smooth kernels.

As applications, we consider convolutional kernels over domains, which are no longer translational invariant, but which are still an important class of kernels for applications.
For these, we derive novel lower bounds for the smallest eigenvalue of the kernel matrices in terms of the separation distance of the data points,
and thus derive stability bounds in terms of the condition number.
\end{abstract}

\section{Introduction}
\label{sec:introduction}

Kernel based methods are well established techniques used in approximation theory, numerical analysis and machine learning
\cite{buhmann2000radial,fasshauer2007meshfree,wendland2005scattered}.

In this work we focus on strictly positive definite kernels, i.e.\ a symmetric functions $k: \Omega \times \Omega \rightarrow \R$ defined on $\Omega \subseteq \R^d$.
Given any set of $n \in \N$ pairwise distinct points $X = \{x_1, ..., x_n\}$, 
the kernel matrix $A_X := A_{k, X}$ collects all pairwise kernel evaluations, 
i.e.\ $A_{X} = (k(x_i, x_j))_{1\leq i, j \leq n} \in \R^{n \times n}$.
The kernel being strictly positive definite means, that the kernel matrix $A_X$ is positive definite for any set of pairwise distinct points $X \subset \Omega$,
and thus it holds $\lambda_{\min}(A_X) > 0$.

Strictly positive definite kernels can be used for various applications,
and the recovery of functions based on scattered data is among the most frequent ones:
Given a domain $\Omega$ and input points $X = \{ x_1, ..., x_n\} \subset \Omega$ with corresponding function values $f(x_1), ..., f(x_n)$ for some function $f \in \mathcal{C}(\Omega)$,
a kernel interpolant 
\begin{align}
\label{eq:kernel_approximant}
s_{f, X} = \sum_{j=1}^n \alpha_j k(\cdot, x_j)
\end{align}
can be computed based on the interpolation conditions $s_{f, X}(x_i) = f(x_i)$ for $i=1, ..., n$.
These interpolation conditions give rise to a linear equation system $A_{X} \alpha = f$ with $\alpha = (\alpha_1, ..., \alpha_n)^\top \in \R^n$, $f = (f(x_1), ..., f(x_n))^\top \in \R^n$.
This showcases the importance of the condition number $\text{cond}(A_{X}) = \frac{\lambda_{\max}(A_{X})}{\lambda_{\min}(A_{X})}$ of the kernel matrix for kernel based methods.
Thus the condition number $\text{cond}(A_{X}) $ and in particular the smallest eigenvalue $\lambda_{\min}(A_{X})$ were thoroughly investigated in the literature, see e.g.\ \cite{narcowich1991norms,narcowich1994condition,schaback1994lower,schaback1995error,diederichs2019improved}.
These estimates usually lower bound the smallest eigenvalue $\lambda_{\min}(A_{X})$ in terms of the separation distance $q_X := \frac{1}{2} \cdot \min_{x_i \neq x_j \in X} \Vert x_i - x_j \Vert$.

These works focus on translational invariant kernels that can be written as $k(x, z) = \Phi(x - z)$ for some function $\Phi: \R^d \rightarrow \R$ or even radial basis function kernels which can be expressed as $k(x, z) = \varphi(\Vert x - z \Vert)$ for some univariate function $\varphi: \R_{\geq 0} \rightarrow \R$.
The reason for this is, that for such kernels Fourier techniques can be employed by analyzing the Fourier transform ${\hat{\Phi}: \R^d \rightarrow \R}$.
For this, we express translational invariant kernels via its Fourier transform, i.e.\
\begin{align}
\label{eq:kernel_via_fourier}
k(x, z) = \Phi(x - z) = (2\pi)^{-d/2}  \int_{\R^d} \hat{\Phi}(\omega) e^{i\omega^\top (x-z)} ~ \mathrm{d}\omega
\end{align}
such that one obtains the formula
\begin{align}
\label{eq:kernel_vector_product}
\alpha^\top A_{X} \alpha &= (2\pi)^{-d/2} \int_{\R^d} \hat{\Phi}(\omega) \left \vert \sum_{j=1}^{n} \alpha_j e^{i \omega^\top x_j} \right \vert^2 ~ \mathrm{d}\omega,
\end{align}
for any $\alpha \in \R^n$.

In this work we will also be concerned with convolutional kernels $k^*: \Omega \times \Omega \rightarrow \R$ for Lipschitz domains $\Omega \subset \R^d$.
Such convolutional kernels can be defined for any kernel $k: \Omega \times \Omega \rightarrow \R$ as
\begin{align}
\label{eq:convKernel}
\begin{aligned}
k^*: \Omega \times \Omega &\rightarrow \R, \\
x, ~ z &\mapsto \int_\Omega k(x, y)k(y, z) ~ \mathrm{d}y.
\end{aligned}
\end{align}
The notion \textit{convolutional kernel} and the notation $k^*$ are motivated by the case of translational invariant kernels $k(x, z) = \Phi(x - z)$ over $\Omega = \R^d$,
because in this case Eq.~\eqref{eq:convKernel} can be written as a convolution of the base kernel $k$ with itself:
\begin{align*}
k^*(x, z) = \int_\Omega k(x, y)k(y, z) ~ \mathrm{d}y = \int_\Omega \Phi(x-y)\Phi(y-z) ~ \mathrm{d}y = (k * k)(x, z).
\end{align*}
Such convolutional kernels are important for applications, 
e.g.\ when reconstructions are done based on mean values, 
such as in Radon transform \cite{de2018image}, kernel-based finite volume methods \cite{aboiyar2010adaptive}, 
in signal theory \cite{plonka2018numerical} or in kernel Galerkin methods \cite{wendland1999meshless}.
Stability bounds on kernel matrices are also relevant for deriving inverse statements for kernel based approximation \cite{schaback2002inverse,wenzel2025sharp},
and in particular stability bounds for the convolutional kernel are likely important for proving inverse statements for the superconvergence case \cite{schaback2018superconvergence,karvonen2025general},
see also the discussion in \Cref{sec:background}.

In order to lower bound the smallest eigenvalue of the kernel matrix of convolutional kernels $k^*$,
we make use of integral discretization techniques to reduce the problem to lower bounding the smallest singular value of unsymmetric kernel matrices of the base kernel $k$ as introduced in \cite{quak1993least,sivakumar1993least}.
Working with Fourier analysis \cite{lowitzsch1996l2} instead of Schoenbergs characterizations of translational invariant kernels,
we present refined results that allow to express our bounds solely based on the separation distance of the data points.
In contrast to \cite{quak1993least,sivakumar1993least},
we focus on finitely smooth kernels, 
that are characterized by the decay of their Fourier transform.


The paper is organized as follows:
In \Cref{sec:background} we introduce further background information and related results.
Subsequently, in \Cref{sec:refined_bounds_unsymm} we prove a spectral equivalence of specific unsymmetric kernel matrices to standard kernel matrices.
This then allows one to derive stability bounds on the kernel matrix of convolutional kernels $k^*$ as formulated in \Cref{sec:stability_bounds_convkernel}.
\Cref{sec:numerics} illustrates the theoretical findings with numerical results, 
and \Cref{sec:outlook} finally concludes the paper and provides an outlook.


\section{Background}
\label{sec:background}

We start by collecting some necessary information about translational invariant kernels $k: \R^d \times \R^d \rightarrow \R$,
which can be written as
\begin{align}
\label{eq:transl_inv_kernel}
k(x, z) = \Phi(x - z)
\end{align}
and are usually analyzed with help of the Fourier transform $\hat{\Phi}$.
The properties of the kernel $k$ are then characterized by the decay of the function $\hat{\Phi}$.
If there are constants $c_\Phi, C_\Phi > 0$ such that it holds
\begin{align}
\label{eq:fourier_decay}
c_\Phi (1 + \Vert \omega \Vert^2)^{-\tau} \leq \hat{\Phi}(\omega) \leq C_\Phi (1+\Vert \omega \Vert^2)^{-\tau},
\end{align}
then the kernel $k$ is called a kernel of finite smoothness $\tau > d/2$.
In this case,
the reproducing kernel Hilbert space (RKHS) $\mathcal{H}_k(\R^d)$ of $k$ can be shown to be norm-equivalent to the Sobolev space $H^\tau(\R^d)$ of smoothness $\tau$,
and this equivalence can also be extended to Lipschitz domains $\Omega \subset \R^d$.
This connection motivates the notion of a Sobolev kernel.
The well known classes of Matérn kernels or Wendland kernels satisfy such a behaviour as in Eq.~\eqref{eq:fourier_decay}.

For the analysis of the stability of kernel based algorithms,
it is necessary to investigate the condition number of the kernel matrix $A_{k, X}$.
While the largest eigenvalue $\lambda_{\max}(A_{k, X})$ mainly scales according to size $|X|$ of the matrix $A_{k, X}$,
the smallest eigenvalue primarly depends on the smallest distance between the points $X$,
characterized in terms of the separation distance defined as 
\begin{align}
\label{eq:separation_distance}
q_X := \frac{1}{2} \min_{x_i \neq x_j \in X} \Vert x_i - x_j \Vert.
\end{align}
For translational invariant kernels satisfying Eq.~\eqref{eq:fourier_decay} and points $X$ within a bounded domain $\Omega$,
the smallest eigenvalue of $A_{k, X}$ can be bounded as 
\begin{align}
\label{eq:lower_bound_lambdamin}
\lambda_{\min}(A_{k, X}) \geq c_{\min} q_X^{2\tau - d}
\end{align}
for some constant $c_{\min}$ independent of $X$,
see e.g.\ \cite[Chapter 12.2]{wendland2005scattered}.

The convolutional kernel $k^*$ introduced in Eq.~\eqref{eq:convKernel} is a domain specific kernel,
as its definition relies on the domain $\Omega$.
Given that $k$ is strictly positive definite,
the strict positive definiteness of $k^*$ follows immediately from
\begin{align*}
\alpha^\top A_{k^*, X} \alpha = \int_\Omega \left| \sum_{j=1}^n \alpha_j k(y, x_j) \right|^2 ~ \mathrm{d}y > 0
\end{align*}
for $(\alpha_1, ..., \alpha_n) \neq 0$.
The reproducing kernel Hilbert space $\mathcal{H}_{k^*}(\Omega)$ of $k^*$ can be characterized with help of the integral operator
\begin{align}
\label{eq:int_operator}
\begin{aligned}
T: ~ L^2(\Omega) &\rightarrow L^2(\Omega) \\
\alpha &\mapsto \int_\Omega k(\cdot, y) \alpha(y) ~ \mathrm{d}y.
\end{aligned}
\end{align}
It can be shown, that the RKHS $\mathcal{H}_{k^*}(\Omega)$ is exactly the image of this integral operator $T_k$ on $L^2(\Omega)$ \cite{wendland2005scattered, steinwart2012mercer}, 
i.e.\ $TL^2(\Omega) = \mathcal{H}_{k^*}(\Omega)$.
This space $TL^2(\Omega)$ is known to play a crucial role in the analysis of kernel based superconvergence \cite{schaback1999improved, schaback2018superconvergence, karvonen2025general},
for which standard approximation rates can be roughly doubled.
Thus the analysis of the spectral properties of the convolutional kernel $k^*$ presented in \Cref{sec:stability_bounds_convkernel} is also expected to advance the understanding of this superconvergence phenomenon.
In particular,
the derivation of corresponding inverse statements for superconvergence likely requires stability bounds \cite{wenzel2025sharp}.


\section{Refined bounds for unsymmetric kernel matrices}
\label{sec:refined_bounds_unsymm}

In this section we start by deriving some utility results,
which we then leverage to show our first main result,
the spectral equivalence of specific unsymmetric kernel matrices to symmetric kernel matrices.




\subsection{Preliminary estimates}

We start by recalling a basic inequality that was also used in \cite{sivakumar1993least, quak1993least},
and which allows to lower bound singular values of matrices with help of their Rayleigh quotient.
For convenience, we also provide the short elementary proof.

For this, we introduce the symmetric part $A_+$ and the asymmetric part $A_-$ of a square matrix $A \in \R^{n \times n}$ as
\begin{align}
\label{eq:def_symm_unsymm_matrix}
A_+ := \frac{A + A^\top}{2}, ~ A_- := \frac{A - A^\top}{2} \quad \Rightarrow \quad A = A_+ + A_-.
\end{align}

\begin{prop}
\label{prop:estimate_symm_part}
Let $A \in \R^{n \times n}$. 
Then it holds
\begin{align*}
| \langle A_+ \alpha, \alpha \rangle | \leq \Vert A\alpha \Vert \cdot \Vert \alpha \Vert.
\end{align*}
\end{prop}

\begin{proof}

For the asymmetric part $A_-$ it holds
\begin{align}
\label{eq:asymmetric_part}
\langle A_- \alpha, \alpha \rangle = \frac{1}{2} \langle (A - A^\top) \alpha, \alpha \rangle = \frac{1}{2} \left( \langle A\alpha, \alpha \rangle - \langle \alpha, A\alpha \rangle \right) = 0.
\end{align}
Thus it holds by using Cauchy Schwarz inequality
\begin{align*}
| \langle A_+ \alpha, \alpha \rangle | = |\langle A\alpha, \alpha \rangle | \leq \Vert A\alpha \Vert \cdot \Vert \alpha \Vert.
\end{align*}
\end{proof}

As a preparatory result to prove our first main result on the spectral equivalence in \Cref{th:utility_theorem} in \Cref{subsec:rayleigh_bounds},
we prove the following utility result.
It allows to bound the effect of an additional $\sin \left( \frac{\omega^\top b}{2} \right)^2$ term within the integral of Eq.~\eqref{eq:kernel_vector_product}.
Recall that $A_X := A_{k,X}$ refers to the kernel matrix and $\alpha = (\alpha_1, ..., \alpha_n)^\top$.

\begin{prop}
\label{prop:estimate_integral_sinus_improved}
Let $\Omega \subset \R^d$ be bounded and $\tau > d/2$.
For any $\varepsilon \in (0, 1)$ there is a constant $c_\varepsilon > 0$ (only depending on $d, \tau, \Omega$ and $\varepsilon$) such that 
for all $X \subset \Omega$ of finitely many pairwise distinct points,
for all $\alpha = (\alpha_j)_{j=1}^{|X|}$ 
and all $b \in \R^d$ with $\Vert b \Vert \leq c_\varepsilon \cdot q_X$ 
($c_\varepsilon \propto \varepsilon^{\frac{1}{2} + \frac{1}{2\tau-d}} \leq \varepsilon^{1/2}$) it holds 
\begin{align}
\begin{aligned}
\label{eq:estimate_integral_sinus}
&\int_{\R^d} (1+\Vert \omega \Vert^2)^{-\tau} \left| \sum_{j=1}^n \alpha_j e^{i \omega^\top x_j}  \right|^2 \cdot \sin \left( \frac{\omega^\top b}{2} \right)^2 ~ \mathrm{d}\omega \\
&\qquad \leq~ 
2\varepsilon \cdot 
\int_{\R^d} (1+\Vert \omega \Vert^2)^{-\tau} \left| \sum_{j=1}^n \alpha_j e^{i \omega^\top x_j}  \right|^2 ~ \mathrm{d}\omega
\end{aligned}
\end{align}
For $\tau > 1$, 
it holds also the improved estimate
\begin{align}
\begin{aligned}
\label{eq:estimate_integral_sinus_improved}
(2\pi)^{-d/2} &\int_{\R^d} (1+\Vert \omega \Vert^2)^{-\tau} \left| \sum_{j=1}^n \alpha_j e^{i \omega^\top x_j}  \right|^2 \cdot \sin \left( \frac{\omega^\top b}{2} \right)^2 ~ \mathrm{d}\omega \\
\leq&~ 2\varepsilon \cdot c_{\min}^{1/\tau} \left( \frac{\alpha^\top A_X \alpha}{\Vert \alpha \Vert^2} \right)^{1-1/\tau} q_X^{2-d/\tau} \cdot \Vert \alpha \Vert^2.
\end{aligned}
\end{align}
using $\alpha^\top A_X \alpha := \int_{\R^d} (1+\Vert \omega \Vert^2)^{-\tau}  \left| \sum_{j=1}^n \alpha_j e^{i \omega^\top x_j}  \right|^2 ~ \mathrm{d}\omega$.
\end{prop}

Note that $\tau > d/2$ implies $2-d/\tau > 0$ and thus Eq.~\eqref{eq:estimate_integral_sinus_improved} is indeed an improvement over Eq.~\eqref{eq:estimate_integral_sinus} since 
$c_{\min} q_X^{2\tau-d} \leq \lambda_{\min}(A) \leq \frac{\alpha^\top A \alpha}{\Vert \alpha \Vert^2}$
by Eq.~\eqref{eq:lower_bound_lambdamin}.
%
%

\begin{proof}
We start by proving Eq.~\eqref{eq:estimate_integral_sinus}:
We make use of a localization estimate, which we take from \cite[Theorem 6]{wenzel2024novel} and can be found in a similar fashion in \cite[Lemma 3.3]{narcowich2006sobolev}:
For any $\varepsilon \in (0, 1)$,
there is a constant 
\begin{align}
\label{eq:constant_a}
a = a_{\tau, d, \Omega, \varepsilon} = c_{\tau, d, \Omega} \varepsilon^{-\frac{1}{2\tau-d}} > 0
\end{align}
such that for any $X \subset \Omega$ of finitely many pairwise distinct points and any $\alpha = (\alpha_j)_{j=1}^{|X|} \in \R^d$ we have for any $R \geq \frac{\max(a, \pi)}{q_X}$:
\begin{align*}
&(1-\varepsilon) \cdot \int_{\R^d} (1+\Vert \omega \Vert^2)^{-\tau} \cdot \left \vert \sum_{j=1}^{|X|} \alpha_j e^{i \omega^\top x_j} \right \vert^2 ~ \mathrm{d}\omega \notag \\
\leq~&
\int_{B_{2R}} (1+\Vert \omega \Vert^2)^{-\tau} \cdot \left \vert \sum_{j=1}^{|X|} \alpha_j e^{i \omega^\top x_j} \right \vert^2 ~ \mathrm{d}\omega, 
\end{align*}
which is equivalent to
\begin{align}
&\int_{\R^d \setminus B_{2R}} (1+\Vert \omega \Vert^2)^{-\tau} \left \vert \sum_{j=1}^{|X|} \alpha_j e^{i \omega^\top x_j} \right \vert^2 ~ \mathrm{d}\omega \notag \\
\leq~& \varepsilon \cdot \int_{\R^d} (1+\Vert \omega \Vert^2)^{-\tau} \left \vert \sum_{j=1}^{|X|} \alpha_j e^{i \omega^\top x_j} \right \vert^2 ~ \mathrm{d}\omega. \label{eq:localization_estimate2}
\end{align}
We decompose the integral $\int_{\R^d}$ from Eq.~\eqref{eq:estimate_integral_sinus} as $\int_{B_{2R}} + \int_{\R^d \setminus B_{2R}}$ using $R = \frac{\max(a, \pi)}{q_X}$.
\begin{enumerate}
\item For the first integral $\int_{B_{2R}}$ we use $\sin \left( \frac{\omega^\top b}{2} \right)^2 \leq \frac{\Vert \omega \Vert^2 \Vert b \Vert^2}{4} \leq R^2 \Vert b \Vert^2$ to obtain
\begin{align*}
&\int_{B_{2R}} (1+\Vert \omega \Vert^2)^{-\tau}  \left| \sum_{j=1}^n \alpha_j e^{i \omega^\top x_j}  \right|^2 \cdot \sin \left( \frac{\omega^\top b}{2} \right)^2 ~ \mathrm{d}\omega \\
\leq~& R^2 \Vert b \Vert^2 \cdot \int_{B_{2R}} (1+\Vert \omega \Vert^2)^{-\tau} \left| \sum_{j=1}^n \alpha_j e^{i \omega^\top x_j}  \right|^2\mathrm{d}\omega \\
\leq~& \frac{\max(a, \pi)^2 \Vert b \Vert^2}{q_X^2} \cdot \int_{\R^d} (1+\Vert \omega \Vert^2)^{-\tau} \left| \sum_{j=1}^n \alpha_j e^{i \omega^\top x_j}  \right|^2\mathrm{d}\omega.
\end{align*}
\item For the second integral $\int_{\R^d \setminus B_{2R}}$ we simply estimate $\sin \left( \frac{\omega^\top b}{2} \right)^2 \leq 1$ and subsequenty make use of Eq.~\eqref{eq:localization_estimate2}:
\begin{align*}
&\int_{\R^d \setminus B_{2R}} (1+\Vert \omega \Vert^2)^{-\tau} \left| \sum_{j=1}^n \alpha_i e^{i \omega^\top x_j}  \right|^2 \cdot \sin \left( \frac{\omega^\top b}{2} \right)^2 ~ \mathrm{d}\omega \\
\leq&~\int_{\R^d \setminus B_{2R}} (1+\Vert \omega \Vert^2)^{-\tau}  \left| \sum_{j=1}^n \alpha_i e^{i \omega^\top x_j}  \right|^2 ~ \mathrm{d}\omega \\
\leq&~ \varepsilon \cdot \int_{\R^d} (1+\Vert \omega \Vert^2)^{-\tau}  \left| \sum_{j=1}^n \alpha_i e^{i \omega^\top x_j}  \right|^2 ~ \mathrm{d}\omega 
\end{align*}
\end{enumerate}
Putting these two estimates together, we obtain
\begin{align*}
&\int_{\R^d} (1+\Vert \omega \Vert^2)^{-\tau} \left| \sum_{j=1}^n \alpha_j e^{i \omega^\top x_j}  \right|^2 \cdot \sin \left( \frac{\omega^\top b}{2} \right)^2 ~ \mathrm{d}\omega \\
\leq&~ \left( \frac{\max(a, \pi)^2 \Vert b \Vert^2}{q_X^2} + \varepsilon \right) \int_{\R^d} (1+\Vert \omega \Vert^2)^{-\tau} \left| \sum_{j=1}^n \alpha_j e^{i \omega^\top x_j}  \right|^2 ~ \mathrm{d}\omega \\
\end{align*}
using $\Vert b \Vert \leq \frac{\sqrt{\varepsilon}}{\max(a, \pi)} \cdot q_X =: c_\varepsilon q_X$, 
the prefactor can be estimated with $\varepsilon + \varepsilon = 2 \varepsilon$,
which finishes the proof.
Note that $a \propto \varepsilon^{-\frac{1}{2\tau-d}}$ according to Eq.~\eqref{eq:constant_a},
i.e.\ $c_\varepsilon \propto \varepsilon^{\frac{1}{2} + \frac{1}{2\tau-d}}$ for $\varepsilon \rightarrow 0$.


The proof of Eq.~\eqref{eq:estimate_integral_sinus_improved} proceeds in the same way,
but the integral $\int_{B_{2R}}$ is estimated more accurately by decomposing it into $\int_{B_{r}} + \int_{B_{2R} \setminus B_r}$ for $r \in (0, 2R)$ and optimizing over $r$.
The details of the computation are given in \Cref{sec:outsourced_proof}.
\end{proof}

\subsection{Spectral equivalence for unsymmetric kernel matrices}
\label{subsec:rayleigh_bounds}

With this proposition at hand, 
we can state and prove the main result \Cref{th:utility_theorem} of this section,
which allows to compare the Rayleigh quotient of symmetric and unsymmetric kernel matrices.
For this, we use the notation 
\begin{align*}
k(X, X) :=& (k(x_i, x_j))_{1 \leq i, j \leq n} \\
\text{ and~ } k(X+b, X) :=& (k(x_i + b, x_j)_{1 \leq i, j \leq n}
\end{align*}
to refer to kernel matrices, 
and in particular we have set $X + b := \{x_1 + b, ..., x_n + b\}$.
After the proof of \Cref{th:utility_theorem}, 
its meaning as well as the improvement over related results in \cite{sivakumar1993least, quak1993least} is discussed.


\begin{theorem}[Main result 1]
\label{th:utility_theorem}
Let $\Omega \subset \R^d$ be bounded and let $k(x, z) = \Phi(x -z)$ be a translational invariant kernel that satisfies Eq.~\eqref{eq:fourier_decay} for some $\tau > d/2$.
Then there is a constant $c = c_{\tau, d, k, \Omega}$ only depending on $\tau, d, k, \Omega$,
such that for any $X \subset \Omega$ of pairwise distinct points and any $0 \neq b \in \R^d$ such that $\Vert b \Vert < c \cdot q_X$,
it holds for all $0 \neq \alpha \in \R^{|X|}$:
\begin{align}
\label{eq:intermediate_eq4b}
\frac{3}{4} \cdot \frac{\langle k(X, X) \alpha, \alpha \rangle}{\Vert \alpha \Vert^2} ~\leq~ \frac{\langle k(X+b, X)_+ \alpha, \alpha \rangle}{\Vert \alpha \Vert^2} ~<~ \frac{\langle k(X, X) \alpha, \alpha \rangle}{\Vert \alpha \Vert^2}.
\end{align}
For $\tau > 1$, this can be specialized to
\begin{align}
\begin{aligned}
\label{eq:intermediate_eq4a}
\frac{\langle k(X, X) \alpha, \alpha \rangle}{\Vert \alpha \Vert^2} - 
\left( \frac{\langle k(X, X) \alpha, \alpha \rangle}{\Vert \alpha \Vert^2} \right)^{1-1/\tau} q_X^{2-d/\tau} \\
\leq~ \frac{\langle k(X+b, X)_+ \alpha, \alpha \rangle}{\Vert \alpha \Vert^2} ~<~ \frac{\langle k(X, X) \alpha, \alpha \rangle}{\Vert \alpha \Vert^2}.
\end{aligned}
\end{align}

\end{theorem}
The benefit of Eq.~\eqref{eq:intermediate_eq4a} over \eqref{eq:intermediate_eq4b} is the smaller gap in the estimate for specific choices of $\alpha$.


\begin{proof}
Using \Cref{prop:estimate_symm_part} and following the simple algebraic calculations from the proof of \cite[Theorem 3.2]{quak1993least} (for a detailed calculation, see \Cref{sec:symm_kernel_part}) we have:
\begin{align}
\label{eq:intermediate_eq3}
(2\pi)^{d/2} \cdot \langle k(X+b, X)_+ \alpha, \alpha \rangle 
=& ~ \int_{\R^d} \hat{\Phi}(\omega) \left| \sum_{i=1}^n \alpha_i e^{i\omega^\top x_i} \right|^2 ~ \mathrm{d}\omega \\
&\quad - 2 \int_{\R^d} \hat{\Phi}(\omega) \left| \sum_{i=1}^n \alpha_i e^{i\omega^\top x_i} \right|^2 \sin \left( \frac{\omega^\top b}{2} \right)^2  ~ \mathrm{d}\omega \notag 
\end{align}
Thus we obtain the upper bound immediately as
\begin{align*}
&~ (2\pi)^{d/2} \cdot \langle k(X+b, X)_+ \alpha, \alpha \rangle \notag \\
\leq& \int_{\R^d} \hat{\Phi}(\omega) \left| \sum_{i=1}^n \alpha_i e^{i\omega^\top x_i} \right|^2 ~ \mathrm{d}\omega 
= (2\pi)^{d/2} \cdot \langle k(X, X)\alpha, \alpha \rangle.
\end{align*}
For the lower bound, we need to estimate the negative part from Eq.~\eqref{eq:intermediate_eq3},
which will be done with \Cref{prop:estimate_integral_sinus_improved} and Eq.~\eqref{eq:fourier_decay}.
Thus we rearrange Eq.~\eqref{eq:intermediate_eq3} and apply Eq.~\eqref{eq:estimate_integral_sinus_improved}
for $\Vert b \Vert \leq c_\varepsilon \cdot q_X$ and $c_\varepsilon$ according to \Cref{prop:estimate_integral_sinus_improved}, 
which gives
\begin{align*}
&~ \langle \alpha, k(X, X) \alpha \rangle - \langle \alpha, k(X+b, X)_+ \alpha \rangle \\
\leq&~ \frac{2 C_\Phi}{(2\pi)^{d/2}} \int_{\R^d} (1+\Vert \omega \Vert^2)^{-\tau} \left| \sum_{j=1}^n \alpha_j e^{i\omega^\top x_j} \right|^2 \cdot \sin \left( \frac{\omega^\top b}{2} \right)^2 ~ \mathrm{d}\omega \\
\leq&~ 4C_\Phi \varepsilon c_{\min}^{1/\tau} \left( \frac{\alpha^\top A \alpha}{\Vert \alpha \Vert^2} \right)^{1-1/\tau} q_X^{2-d/\tau} \cdot \Vert \alpha \Vert^2
\end{align*}
in the case $\tau > 1$.
Choosing $\varepsilon$ correspondingly, this already gives the first result Eq.~\eqref{eq:intermediate_eq4a}.
The same rearrangement of Eq.~\eqref{eq:intermediate_eq3}, 
but applying Eq.~\eqref{eq:estimate_integral_sinus} then yields Eq.~\eqref{eq:intermediate_eq4b}, which works for any $\tau > d/2$.
\end{proof}


From Eq.~\eqref{eq:asymmetric_part} we have $\langle k(X+b, X)\alpha, \alpha \rangle = \langle k(X+b, X)_+ \alpha, \alpha \rangle$,
thus \Cref{th:utility_theorem} relates the Rayleigh quotient for the matrices $k(X, X)$ and $k(X+b, X)$ to each other and in particular shows the spectral equivalence of $k(X, X)$ and $k(X+b, X)$.
This thus allows for statements on the alignment of the eigenvectors as discussed in \cite{wenzel2025spectral}:
Due to the two-sided bound of Eq.~\eqref{eq:intermediate_eq3}, 
the eigenvector to a small eigenvalue of $k(X, X)$ yields a small value of the Rayleigh quotient of $k(X+b, X)$, and vice versa.
In particular, up to the factor $\frac{3}{4}$ the (symmetric part of the) unsymmetric kernel matrix $k(X+b, X)$ enjoys the same stability estimates as $k(X, X)$, 
i.e.\ Eq.~\eqref{eq:lower_bound_lambdamin}.
Note that by picking a smaller constant $c = c_{\tau, d, \Omega}$, 
the prefactor $\frac{3}{4}$ within Eq.~\eqref{eq:intermediate_eq4b} can even be chosen closer to $1$.

Note that the two-sided bounds of \Cref{th:utility_theorem} heavily rely on having the same shift $b \in \R^d$ for every data point.
In case of having different shifts per data point, i.e.\ $\{x_1 + b_1, ..., x_n + b_n \}$, 
one rather needs to focus on singular value decomposition based equivalences,
and we leave this more intricate investigation of unsymmetric kernel matrices for future research.

Finally we comment on the relation of \Cref{th:utility_theorem} to related results in previous research \cite{quak1993least}.
Despite the analysis in \cite{quak1993least} is based on Schoenbergs characterization instead of Fourier analysis, the calculations can be compared:
\cite[Theorem 3.2]{quak1993least} also considered the unsymmetric kernel matrix $k(X+b, X)$,
however applied the worst case estimate $| \sum_{i=1}^n \alpha_i e^{i \omega^\top x_i} |^2 \leq N \Vert \alpha \Vert^2$ to bound the negative part of Eq.~\eqref{eq:intermediate_eq3}, see \cite[Remark 3.5]{quak1993least}.
Only for the Gaussian kernel \cite[Proposition 3.11]{quak1993least} and for the univariate case \cite[Section 5]{quak1993least}, they derive improved results.
Overall the case of finitely smooth kernel satisfying Eq.~\eqref{eq:fourier_decay} was not treated in \cite{quak1993least}.

\section{Stability bounds for convolutional kernels}
\label{sec:stability_bounds_convkernel}

The main result of this section is formulated in \Cref{th:main_result}, 
namely a lower bound on the Rayleigh quotient of the kernel matrix of the convolutional kernel $k^*$.
Subsequently, some corollaries formulate the result for some special cases of frequently used kernels, 
see \Cref{cor:conclusion_1}.

\begin{theorem}[Main result 2]
\label{th:main_result}
Let $\Omega \subset \R^d$ be a bounded domain and let $k: \Omega \times \Omega \rightarrow \R$ be a translational-invariant kernel. 
Then there is a constant $c = c_d > 0$ (only depending on the dimension $d$),
such that for any set $X \subset \Omega$ of $n := |X|$ pairwise distinct points it holds
\begin{align*}
\frac{\alpha^\top k^*(X, X) \alpha}{\Vert \alpha \Vert^2} 
&\geq c q^d \cdot \left( \frac{\alpha^\top k(X, X) \alpha}{\Vert \alpha \Vert^2} \right)^2 \qquad \qquad \forall 0 \neq \alpha \in \R^{|X|}
\end{align*}

with $q := \min(\mathrm{dist}(X, \partial \Omega), q_X) > 0$.
\end{theorem}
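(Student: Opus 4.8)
The plan is to combine the integral representation of the convolutional kernel matrix recorded in \Cref{sec:background} with the spectral equivalence \Cref{th:utility_theorem}. Writing $g_\alpha := \sum_{j=1}^n \alpha_j\, k(\cdot,x_j)$, we have $\alpha^\top k^*(X,X)\alpha = \int_\Omega g_\alpha(y)^2\,\mathrm{d}y = \|g_\alpha\|_{L^2(\Omega)}^2$, so it suffices to bound this $L^2$-norm from below. The first step is to fix a radius $r := \lambda q$ with $\lambda\in(0,1)$ small enough (depending only on $d$ and on the constant $c$ of \Cref{th:utility_theorem}); since $q\le q_X$ and $q\le\mathrm{dist}(X,\partial\Omega)$ by definition, this choice ensures simultaneously that the balls $B(x_1,r),\dots,B(x_n,r)$ are pairwise disjoint, that each of them is contained in $\Omega$, and that every $b\in B(0,r)$ satisfies $\|b\|<c\,q_X$, so that \Cref{th:utility_theorem} is applicable to the shift $b$.

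Next I test $g_\alpha$ against $\psi := \sum_{i=1}^n \alpha_i\,\mathbf 1_{B(x_i,r)}\in L^2(\Omega)$. Disjointness of the balls gives $\|\psi\|_{L^2(\Omega)}^2 = \omega_d\,r^d\,\|\alpha\|^2$, where $\omega_d$ is the volume of the unit ball in $\R^d$, and Cauchy--Schwarz yields
\[
\alpha^\top k^*(X,X)\alpha \;\ge\; \frac{\langle g_\alpha,\psi\rangle_{L^2(\Omega)}^2}{\|\psi\|_{L^2(\Omega)}^2} \;=\; \frac{\langle g_\alpha,\psi\rangle_{L^2(\Omega)}^2}{\omega_d\,r^d\,\|\alpha\|^2}.
\]
The decisive observation is that, by translation invariance of $k$ and the substitution $y=x_i+b$,
\[
\langle g_\alpha,\psi\rangle_{L^2(\Omega)} = \sum_{i=1}^n\alpha_i\int_{B(x_i,r)}g_\alpha(y)\,\mathrm{d}y = \int_{B(0,r)}\Big(\sum_{i,j}\alpha_i\alpha_j\,k(x_i+b,x_j)\Big)\mathrm{d}b = \int_{B(0,r)}\langle k(X+b,X)\alpha,\alpha\rangle\,\mathrm{d}b ,
\]
i.e. the pairing is exactly the average over $b\in B(0,r)$ of the Rayleigh numerators of the shifted (unsymmetric) kernel matrices.

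Now I invoke the earlier results: by Eq.~\eqref{eq:asymmetric_part} the antisymmetric part drops out, $\langle k(X+b,X)\alpha,\alpha\rangle = \langle k(X+b,X)_+\alpha,\alpha\rangle$, and \Cref{th:utility_theorem} in the form \eqref{eq:intermediate_eq4b} gives $\langle k(X+b,X)_+\alpha,\alpha\rangle\ge\tfrac34\langle k(X,X)\alpha,\alpha\rangle$ for every $b\in B(0,r)$. Hence $\langle g_\alpha,\psi\rangle\ge\tfrac34\,\omega_d\,r^d\,\langle k(X,X)\alpha,\alpha\rangle>0$, and inserting this into the Cauchy--Schwarz bound, dividing by $\|\alpha\|^2$, and recalling $r=\lambda q$ gives
\[
\frac{\alpha^\top k^*(X,X)\alpha}{\|\alpha\|^2}\;\ge\;\frac{9}{16}\,\omega_d\,r^d\,\Big(\frac{\alpha^\top k(X,X)\alpha}{\|\alpha\|^2}\Big)^2 = \frac{9\,\omega_d\,\lambda^d}{16}\,q^d\,\Big(\frac{\alpha^\top k(X,X)\alpha}{\|\alpha\|^2}\Big)^2 ,
\]
which is the assertion with $c:=\tfrac{9}{16}\omega_d\lambda^d$.

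The only non-routine point, and hence the expected main obstacle, is the identification of $\langle g_\alpha,\psi\rangle$ with $\int_{B(0,r)}\langle k(X+b,X)\alpha,\alpha\rangle\,\mathrm{d}b$, together with checking that the radius $r$ can be kept proportional to $q$ while still small enough for \Cref{th:utility_theorem} to hold uniformly over $b\in B(0,r)$ (this is where any finer structure of $k$, e.g. the Fourier decay \eqref{eq:fourier_decay}, ultimately enters through that theorem). The remainder is bookkeeping of constants and of the powers of $q$: two factors $r^d$ arise from $\langle g_\alpha,\psi\rangle^2$ against one factor $r^{-d}$ from $\|\psi\|_{L^2}^{-2}$, for a net $r^d\sim q^d$. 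One could also replace \eqref{eq:intermediate_eq4b} by the sharper \eqref{eq:intermediate_eq4a} in this step to obtain a better constant for specific $\alpha$, but the simple bound already delivers the stated form.
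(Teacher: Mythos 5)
Your proof is correct and arrives at the same conclusion, but it takes a genuinely different route from the paper's. The paper restricts the $L^2(\Omega)$-integral to the disjoint balls $B_q(x_i)$ (invoking a lemma of Quak et al.), then \emph{discretizes} each ball integral by a quadrature rule with nodes $\{b_\ell\}\subset B_q(0)$, applies a pigeonhole argument to extract one favourable shift $b_{\ell^*}$, and finally passes from $\Vert k(X+b_{\ell^*},X)\alpha\Vert^2$ to the Rayleigh quotient via \Cref{prop:estimate_symm_part} before invoking \Cref{th:utility_theorem}. You instead keep the integral intact and test $g_\alpha=\sum_j\alpha_j k(\cdot,x_j)$ against $\psi=\sum_i\alpha_i\mathbf 1_{B(x_i,r)}$, so that Cauchy--Schwarz on $L^2(\Omega)$ replaces both the quadrature discretization and \Cref{prop:estimate_symm_part} in one stroke, and the identity $\langle g_\alpha,\psi\rangle=\int_{B(0,r)}\langle k(X+b,X)\alpha,\alpha\rangle\,\mathrm db$ lets you apply \Cref{th:utility_theorem} uniformly over the continuum of shifts rather than at one pigeonholed point. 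This is cleaner and avoids the slightly delicate ``approximate arbitrarily accurately'' quantifier in the quadrature step. You also handle a point the paper glosses over: \Cref{th:utility_theorem} requires $\Vert b\Vert<c\,q_X$ with $c=c_{\tau,d,\Omega}$, so the shift radius must be $r=\lambda q$ with $\lambda$ small enough, not $q$ itself; you make this explicit, whereas the paper uses $B_q(0)$ directly. The trade-off is that your constant carries an explicit factor $\lambda^d$ (yielding $c=\tfrac{9}{16}\omega_d\lambda^d$), and since $\lambda$ inherits a dependence on $\tau$ and $\Omega$ through the constant of \Cref{th:utility_theorem}, the final constant depends on $\tau,d,\Omega$ rather than on $d$ alone — but this caveat equally affects the paper's own proof, and it likewise needs the Fourier-decay hypothesis \eqref{eq:fourier_decay} (which the theorem statement omits but both proofs implicitly use through \Cref{th:utility_theorem}).
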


\begin{proof}
The proof follows the ideas of the integral discretization technique of \cite[Section 4]{quak1993least}, 
and in the end leverages \Cref{th:utility_theorem}.
We start by using \cite[Lemma 4.3]{quak1993least} for balls instead of cubes to obtain
\begin{align*}
\alpha^\top k^*(X, X) \alpha &\geq \sum_{i=1}^n \int_{B_q(x_i)} \left | \sum_{j=1}^n \alpha_j k(y, x_j) \right|^2 ~ \mathrm{d}y.
\end{align*}

By definition of the integral, these $n$ integrals can be approximated arbitrarily accurate with help of quadrature points:
Thus let $\{ b_\ell \}_{\ell = 1}^M \subset B_q(0)$ such that
for all $i=1, ..., n$
\begin{align*}
&\left| \int_{B_q(x_i)} \left | \sum_{j=1}^n \alpha_j k(y, x_j) \right|^2 ~ \mathrm{d}y - \frac{|B_q(x_i)|}{M} \sum_{\ell=1}^M \left| \sum_{j=1}^n \alpha_j k(x_j + b_\ell, x_j) \right|^2 \right| \\
\leq~& \frac{1}{2} \cdot \min_{i=1, ..., n} \int_{B_q(x_i)} \left | \sum_{j=1}^n \alpha_j k(y, x_j) \right|^2 ~ \mathrm{d}y,
\end{align*}
from which we directly obtain 
\begin{align*}
\frac{|B_q(x_i)|}{2M} \sum_{\ell=1}^M \left| \sum_{j=1}^n \alpha_j k(x_j + b_\ell, x_j) \right|^2 \leq \int_{B_q(x_i)} \left | \sum_{j=1}^n \alpha_j k(y, x_j) \right|^2 ~ \mathrm{d}y.
\end{align*}
This then allows for further lower estimate $\alpha^\top k^*(X, X) \alpha$ as 
\begin{align}
\label{eq:intermediate_eq1}
\alpha^\top k^*(X, X) \alpha &\geq \sum_{i=1}^n \int_{B_q(x_i)} \left | \sum_{j=1}^n \alpha_j k(y, x_j) \right|^2 ~ \mathrm{d}y \notag \\
&\geq \sum_{i=1}^n \frac{|B_q(x_i)|}{2M} \sum_{\ell=1}^M \left| \sum_{j=1}^n \alpha_j k(x_i + b_\ell, x_j) \right|^2 \notag \\
&= \frac{|B_q(0)|}{2} \sum_{\ell = 1}^M \frac{1}{M} \sum_{i=1}^n \left | \sum_{j=1}^n \alpha_j k(x_i + b_\ell, x_j) \right|^2 \notag \\
&\stackrel{\exists \ell^*}{\geq} \frac{|B_q(0)|}{2} \sum_{i=1}^n \left | \sum_{j=1}^n \alpha_j k(x_i + b_{\ell^*}, x_j) \right|^2. 
\end{align}
In the final step, the pigeonhole principle was leveraged: 
Within the sum $\sum_{\ell = 1}^M$, there needs to be at least one summand which is smaller that the average $\frac{1}{M} \sum_{\ell=1}^M$.

Eq.~\eqref{eq:intermediate_eq1} can be conveniently written as a matrix-vector-product:
Using $k(X+b, X) := (k(x_i + b_{\ell^*}, x_j))_{1 \leq i, j \leq n}$ as matrix notation, it holds
\begin{align}
\label{eq:matrix_vector_expression}
\alpha^\top k^*(X, X) \alpha
&\geq \frac{|B_q(0)|}{2} \sum_{i=1}^n \left | \sum_{j=1}^n \alpha_j k(x_i + b_{i, \ell^*}, x_j) \right|^2 \notag \\
&= \frac{|B_q(0)|}{2} \cdot \Vert k(X+b, X) \alpha \Vert^2.
\end{align}
Here, $k(X+b, X)$ is a non-symmetric square kernel matrix.
We continue by applying \Cref{prop:estimate_symm_part} using $A = k(X+b, X)$, such that we obtain
\begin{align*}
\alpha^\top k^*(X, X) \alpha
&\geq \frac{|B_q(0)|}{2} \cdot \frac{| \langle k(X+b, X)_+ \alpha, \alpha \rangle |^2}{\Vert \alpha \Vert^2}.
\end{align*}
For this expression, 
\Cref{th:utility_theorem} can be applied to lower bound the right hand side quantity further as
\begin{align*}
\alpha^\top k^*(X, X) \alpha
&\geq \frac{9|B_q(0)|}{32} \cdot \frac{(\alpha^\top k(X, X) \alpha)^2}{\Vert \alpha \Vert^2}.
\end{align*}
Dividing by $\Vert \alpha \Vert^2$ and using $|B_q(0)| = \frac{\pi^{d/2}}{\Gamma(d/2+1)} q^d$ gives the statement.
\end{proof}

A related result was also derived in \cite[Theorem 4.6]{quak1993least}, 
however involving an undesired additional factor $\varepsilon = \varepsilon(d, q, k, n) \in (0, 1]$ on the right hand side which weakened the estimate significantly.
In our result, we removed the dependency on $q$ and $n$, 
thus ending up with the constant $c = c_d$ only depending on the dimension.
Only for the case of the Gaussian kernel, \cite[Theorem 4.10]{quak1993least} was able to derive an improved statement, 
which is now complemented by our \Cref{th:main_result} for kernels of finite smoothness.

Based on \Cref{th:main_result}, 
we can directly conclude a lower bound on the smallest eigenvalue of kernel matrices for convolutional kernels.
This is stated in the following \Cref{cor:conclusion_1}:

\begin{cor}
\label{cor:conclusion_1}
Let $\Omega \subset \R^d$ be a bounded domain and let $k(x, z) = \Phi(x - z)$ be a translational invariant kernel that satisfies Eq.~\eqref{eq:fourier_decay}.
Then there is a constant $c = c_{d, k, \Omega} > 0$ (only depending on the dimension $d$, kernel $k$ and domain $\Omega$),
such that for any set $X \subset \Omega$ of $n := |X|$ pairwise distinct points it holds
\begin{align}
\begin{aligned}
\label{eq:lower_bound_conv}
\lambda_{\min}(k^*(X, X))
&\geq c q^d \cdot \lambda_{\min}(k(X, X))^2 \\
&\geq c q^{4\tau-d}
\end{aligned}
\end{align}
with $q := \min(\mathrm{dist}(X, \partial \Omega), q_X) > 0$.
\end{cor}

Such lower bounds on the smallest eigenvalue directly imply stability bounds in terms of the condition number,
and are thus of importance for applications \cite{de2018image,aboiyar2010adaptive,plonka2018numerical}.
Furthermore, they also contribute to the analysis of inconsistency of kernel interpolation in statistical learning theory \cite{haas2023mind}.

\begin{rem}
We believe that the lower bound from Eq.~\eqref{eq:lower_bound_conv} actually holds independent of the distance $\mathrm{dist}(X, \partial \Omega)$ of the points to the boundary,
when we assume some additional regularity on $\Omega$ such as an interior cone condition.
We leave an improvement of the statement in this direction for future research.
\end{rem}

\Cref{cor:conclusion_1} allows to obtain the following bound on the condition number of the kernel matrix:

\begin{cor}
\label{cor:conclusion_2}
Under the assumptions of \Cref{cor:conclusion_1}, there exists a constant $c>0$
(only depending on the dimension $d$, kernel $k$ and domain $\Omega$),
such that the condition number of $k^*(X, X)$ can be bounded as
\begin{align*}
\mathrm{cond}(k^*(X, X)) \leq c \cdot q^{-4\tau}.
\end{align*}
\end{cor}
\begin{proof}
It holds $\mathrm{cond}(k^*(X, X)) = \lambda_{\max}(k^*(X, X)) / \lambda_{\min}(k^*(X, X))$.
The smallest eigenvalue $\lambda_{\min}(k^*(X, X))$ can be bounded from below via Eq.~\eqref{eq:lower_bound_conv}.
The largest eigenvalue $\lambda_{\max}(k^*(X, X))$  can be bounded by $|X| \cdot \max_{x \in \Omega} |k^*(x, x)| \leq |X| \cdot |\Omega| \cdot \max_{x \in \Omega} |k(x, x)|$.
Based on volume comparison arguments \cite[Section 14.1]{wendland2005scattered},
the number of points $|X|$ can be related to the separation distance as $|X| \leq c' \cdot q_X^{-d} \leq c' q^{-d}$.
\end{proof}



%


\section{Numerical results}
\label{sec:numerics}

\subsection{Spectral equivalence}

In this section we visualize the spectral equivalence result stated in \Cref{th:utility_theorem}.
For both cases $d = 2$ and $d=3$ we consider $n=50$ well-distributed Halton points $X \in [0, 1]^d$
and a small perturbation vector $b \in \R^d$ such that $\Vert b \Vert = 0.1 q_X$,
we compute the kernel matrices $k(X, X)$ and $k(X+b, X)_+$  as in the setting of \Cref{th:utility_theorem}.
For $d=2$ we make use of the linear Matérn kernel defined by $\Phi(x) = \varphi(\Vert x \Vert)$ and $\varphi(r) = (1+r)\exp(-r)$,
and for $d=3$ we make use of the quadratic Matérn kernel given by $\varphi(r) = (3 + 3r + r^2)\exp(-r)$.

The spectral equivalence of Eq.~\eqref{eq:intermediate_eq4b} stated using Loewner order is
\begin{align*}
\frac{3}{4} k(X, X)  \leq~ k(X+b, X)_+ <~ k(X, X)
\end{align*}
and can be reformulated equivalently as
\begin{align}
\label{eq:equivalence_identity}
\frac{3}{4} \cdot I  \leq k(X, X)^{-1/2} k(X+b, X)_+ k(X, X)^{-1/2} <~ I.
\end{align} 
To visualize this spectral equivalance, 
\Cref{fig:spectral_equivalence} thus depicts absolute values of the matrix $k(X, X)^{-1/2} k(X+b, X)_+ k(X, X)^{-1/2}$.
One can observe that the diagonal values are close to one, 
while the off-diagonal values are close to zero (mind the logarithmic scaling),
which is in accordance with the theoretical result Eq.~\eqref{eq:equivalence_identity}.


\begin{figure}[t]
\begin{center}
\setlength\fwidth{.47\textwidth}
\begin{minipage}{0.4\textwidth}
\begin{tikzpicture}

\definecolor{darkgray176}{RGB}{176,176,176}

\begin{axis}[
width=1.0\fwidth,
height=1.0\fwidth,
point meta max=0,
point meta min=-5,
tick align=outside,
tick pos=left,
x grid style={darkgray176},
xmin=-0.5, xmax=49.5,
xtick style={color=black},
y dir=reverse,
y grid style={darkgray176},
ymin=-0.5, ymax=49.5,
ytick style={color=black}
]
\addplot graphics [includegraphics cmd=\pgfimage,xmin=-0.5, xmax=49.5, ymin=49.5, ymax=-0.5] {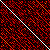};
\end{axis}

\end{tikzpicture}
\end{minipage}%
\hspace{.2cm}
\begin{minipage}{0.48\textwidth}
\centering
\vspace{.1cm}
\begin{tikzpicture}

\definecolor{darkgray176}{RGB}{176,176,176}

\begin{axis}[
width=1.0\fwidth,
height=1.0\fwidth,
colorbar,
colorbar style={ylabel={}},
colormap={mymap}{[1pt]
  rgb(0pt)=(0.0416,0,0);
  rgb(365pt)=(1,0,0);
  rgb(746pt)=(1,1,0);
  rgb(1000pt)=(1,1,1)
},
point meta max=0,
point meta min=-5,
tick align=outside,
tick pos=left,
x grid style={darkgray176},
xmin=-0.5, xmax=49.5,
xtick style={color=black},
y dir=reverse,
y grid style={darkgray176},
ymin=-0.5, ymax=49.5,
ytick style={color=black}
]
\addplot graphics [includegraphics cmd=\pgfimage,xmin=-0.5, xmax=49.5, ymin=49.5, ymax=-0.5] {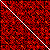};
\end{axis}

\end{tikzpicture}
\end{minipage}
\end{center}
\caption{Heatmap visualization of the absolute values of the matrix $k(X, X)^{-1/2} k(X+b, X)_+ k(X, X)^{-1/2}$ using 50 well distributed Halton points $ X \subset [0, 1]^2$ (left, using linear Matérn kernel) respective $X \subset [0, 1]^3$ (right, using quadratic Matérn kernel).}
\label{fig:spectral_equivalence}
\end{figure}

\subsection{$\lambda_{\min}$ of convolutional kernels}

In this example,
we consider the basic and the linear Matérn kernel on $\Omega = [0, 1]$.
They are translational invariant kernel $k(x, z) = \Phi(x-z)$ with
\begin{align*}
\Phi_\text{basic}(r) &= \exp(-r) \\
\Phi_{\text{lin}}(r) &= (1+r)\exp(-r),
\end{align*}
in particular these kernels satisfy the Fourier decay condition of Eq.~\eqref{eq:fourier_decay} with $\tau_\text{basic} = 1$, $\tau_\text{linear} = 2$.

For both kernels, we consider each $n$ linearly spaces points $X_n \subset \Omega$ for 30 logarithmically equally spaced values of $n \in [10, 1000]$ and compute the smallest eigenvalue $\lambda_{\min}(k(X_n, X_n))$ as well as $\lambda_{\min}(k^*(X_n, X_n))$ of the kernel matrices for the kernel $k$ respective its convolutional kernel $k^*$.
\Cref{fig:lambda_min_matern} depicts the results, additionally showing a dashed line that indicates the asymptotic given by the lower bounds of Eq.~\eqref{eq:lower_bound_lambdamin} and Eq.~\eqref{eq:lower_bound_conv}. 
One can clearly observe that the smallest eigenvalue behaves according to the bounds derived in \Cref{cor:conclusion_1}.
In particular we mention that the results differ only marginally, if $\{ 0, 1\} \in X_n$.
This suggests, that $q \geq \min(\mathrm{dist}(X, \partial \Omega), q_X)$ is actually not necessary in \Cref{cor:conclusion_1},
but $q = q_X$ is sufficient.


\begin{figure}[t]
\setlength\fwidth{.55\textwidth}
\begin{tikzpicture}

\definecolor{darkgray176}{RGB}{176,176,176}
\definecolor{darkorange25512714}{RGB}{255,127,14}
\definecolor{lightgray204}{RGB}{204,204,204}
\definecolor{steelblue31119180}{RGB}{31,119,180}

\begin{axis}[
width=0.951\fwidth,
height=0.75\fwidth,
legend cell align={left},
legend style={
  fill opacity=0.8,
  draw opacity=1,
  text opacity=1,
  at={(0.03,0.03)},
  anchor=south west,
  draw=lightgray204
},
log basis x={10},
log basis y={10},
tick align=outside,
tick pos=left,
x grid style={darkgray176},
xmin=8.60891659331735, xmax=232.317269928309,
xmode=log,
xtick style={color=black},
y grid style={darkgray176},
ymin=1.66665505038919e-09, ymax=0.129898574335127,
ymode=log,
ytick style={color=black},
xlabel={\#points}
]
\addplot [semithick, steelblue31119180]
table {%
10 0.0568706355670114
11 0.0509790108184866
12 0.0462028919765764
13 0.0422514746581091
15 0.0360910766583689
16 0.0336425094373854
18 0.0296274128354198
20 0.0264721339665011
22 0.0239264621735002
25 0.0209125978721527
28 0.0185747016811636
31 0.0167079271766553
34 0.0151827029214497
38 0.0135357861205502
42 0.0122115780074669
47 0.0108812797242231
52 0.00981255447584397
57 0.00893511542368145
64 0.00794112253319229
71 0.00714623205743548
78 0.0064960488316209
87 0.00581578333197956
97 0.00520965210649342
107 0.00471796274227804
119 0.00423800109328926
132 0.00381731586191683
146 0.00344866135030548
162 0.00310587206185355
180 0.00279350154815347
200 0.00251271251551893
};
\addplot [semithick, darkorange25512714]
table {%
10 0.00011886014854231
11 8.6252120083529e-05
12 6.45552621912302e-05
13 4.95643953872575e-05
15 3.10548270318192e-05
16 2.51978461772709e-05
18 1.72529274722496e-05
20 1.23266132363683e-05
22 9.1111548721959e-06
25 6.08967878326465e-06
28 4.26962252390914e-06
31 3.10845882520479e-06
34 2.33302598925687e-06
38 1.6534705822007e-06
42 1.21422930286145e-06
47 8.59109625445634e-07
52 6.3003277831556e-07
57 4.75683433737523e-07
64 3.3393208945073e-07
71 2.43352635791937e-07
78 1.82786546806138e-07
87 1.31162849704271e-07
97 9.42762588211258e-08
107 7.00214604715077e-08
119 5.07507370304113e-08
132 3.70873697152746e-08
146 2.73462015184235e-08
162 1.99751613844813e-08
180 1.45338712965351e-08
200 1.05769074353314e-08
};
\addplot [semithick, black, dashed]
table {%
10 0.0222222222222222
11 0.02
12 0.0181818181818182
13 0.0166666666666667
15 0.0142857142857143
16 0.0133333333333333
18 0.0117647058823529
20 0.0105263157894737
22 0.00952380952380952
25 0.00833333333333333
28 0.00740740740740741
31 0.00666666666666667
34 0.00606060606060606
38 0.00540540540540541
42 0.00487804878048781
47 0.00434782608695652
52 0.00392156862745098
57 0.00357142857142857
64 0.00317460317460317
71 0.00285714285714286
78 0.0025974025974026
87 0.00232558139534884
97 0.00208333333333333
107 0.00188679245283019
119 0.00169491525423729
132 0.00152671755725191
146 0.00137931034482759
162 0.00124223602484472
180 0.00111731843575419
200 0.00100502512562814
};
\addplot [semithick, black, dashed]
table {%
10 4.11522633744856e-05
11 3e-05
12 2.25394440270473e-05
13 1.73611111111111e-05
15 1.0932944606414e-05
16 8.88888888888889e-06
18 6.10624872786485e-06
20 4.37381542498906e-06
22 3.23939099449303e-06
25 2.17013888888889e-06
28 1.52415790275873e-06
31 1.11111111111111e-06
34 8.34794223223975e-07
38 5.9226501885377e-07
42 4.35280973868632e-07
47 3.08210733952494e-07
52 2.26157360291291e-07
57 1.70827259475219e-07
64 1.19977444240483e-07
71 8.74635568513119e-08
78 6.57126648018873e-08
87 4.71656583697033e-08
97 3.39084201388889e-08
107 2.51885784909691e-08
119 1.82589261803787e-08
132 1.33446555321826e-08
146 9.84050186559514e-09
162 7.18858854699695e-09
180 5.23072829696728e-09
200 3.806817222904e-09
};
\end{axis}

\end{tikzpicture}
\begin{tikzpicture}

\definecolor{darkgray176}{RGB}{176,176,176}
\definecolor{darkorange25512714}{RGB}{255,127,14}
\definecolor{lightgray204}{RGB}{204,204,204}
\definecolor{steelblue31119180}{RGB}{31,119,180}

\begin{axis}[
width=0.951\fwidth,
height=0.75\fwidth,
legend cell align={left},
legend style={
  fill opacity=0.8,
  draw opacity=1,
  text opacity=1,
  at={(0.03,0.03)},
  anchor=south west,
  draw=lightgray204
},
log basis x={10},
log basis y={10},
tick align=outside,
tick pos=left,
x grid style={darkgray176},
xmin=8.60891659331735, xmax=232.317269928309,
xmode=log,
xtick style={color=black},
y grid style={darkgray176},
ymin=9.67108864420744e-21, ymax=0.000747825842771531,
ymode=log,
ytick style={color=black},
xlabel={\#points}
]
\addplot [semithick, steelblue31119180]
table {%
10 0.000127687777536716
11 9.12427051158741e-05
12 6.75264875039512e-05
13 5.14109596502163e-05
15 3.18457204284159e-05
16 2.57377980314241e-05
18 1.75221725720661e-05
20 1.24711574202464e-05
22 9.19350452871469e-06
25 6.12839311110785e-06
28 4.28944269243446e-06
31 3.11931750086371e-06
34 2.33931269898752e-06
38 1.65672429514872e-06
42 1.21602712880058e-06
47 8.60032058251325e-07
52 6.30538841431724e-07
57 4.75976682080733e-07
64 3.34079323271038e-07
71 2.43432004000361e-07
78 1.82831875424865e-07
87 1.31186490493603e-07
97 9.42886125714892e-08
107 7.00283369252972e-08
119 5.07543828796021e-08
132 3.70893328995976e-08
146 2.73472767741838e-08
162 1.99757381084406e-08
180 1.45341794331824e-08
200 1.05770702379763e-08
};
\addlegendentry{ { \scriptsize $\lambda_{\min}(k(X,X))$ } }
\addplot [semithick, darkorange25512714]
table {%
10 9.39015888450254e-10
11 4.29869620858017e-10
12 2.13088517923071e-10
13 1.1271358878666e-10
15 3.67643184905821e-11
16 2.23320874303429e-11
18 9.07610130292826e-12
20 4.09263205106748e-12
22 2.00428895930304e-12
25 7.75366265057479e-13
28 3.36430386937811e-13
31 1.59828618366265e-13
34 8.16443554393366e-14
38 3.65129080571313e-14
42 1.76747386387524e-14
47 7.7012745887476e-15
52 3.3827039678878e-15
57 1.43036537751662e-15
64 5.52982599028024e-16
71 -3.44371031809238e-16
78 -8.19750483133154e-16
87 -1.70476515936843e-15
97 -1.79321422436018e-15
107 -1.67655059790201e-15
119 -4.73624324452307e-15
132 -4.45740485548869e-15
146 -4.18317209979165e-15
162 -6.52022877316837e-15
180 -7.44260733160549e-15
200 -6.96632307535098e-15
};
\addlegendentry{ { \scriptsize $\lambda_{\min}(k^*(X,X))$ } }
\addplot [semithick, black, dashed]
table {%
10 2.74348422496571e-05
11 2e-05
12 1.50262960180316e-05
13 1.15740740740741e-05
15 7.28862973760933e-06
16 5.92592592592593e-06
18 4.07083248524323e-06
20 2.91587694999271e-06
22 2.15959399632869e-06
25 1.44675925925926e-06
28 1.01610526850582e-06
31 7.40740740740741e-07
34 5.56529482149317e-07
38 3.94843345902513e-07
42 2.90187315912421e-07
47 2.05473822634996e-07
52 1.50771573527527e-07
57 1.13884839650146e-07
64 7.99849628269885e-08
71 5.83090379008746e-08
78 4.38084432012582e-08
87 3.14437722464689e-08
97 2.26056134259259e-08
107 1.6792385660646e-08
119 1.21726174535858e-08
132 8.89643702145509e-09
146 6.56033457706343e-09
162 4.79239236466464e-09
180 3.48715219797818e-09
200 2.53787814860266e-09
};
\addplot [semithick, black, dashed]
table {%
10 1.46352610690138e-10
11 7e-11
12 3.59210682761495e-11
13 1.95357153063557e-11
15 6.64051543149538e-12
16 4.09693644261545e-12
18 1.70590793912325e-12
20 7.83110114638046e-13
22 3.8865385241491e-13
25 1.52622775830904e-13
28 6.69193464518236e-14
31 3.20073159579332e-14
34 1.64248140265887e-14
38 7.37370861232731e-15
42 3.59427285497066e-15
47 1.60617631803079e-15
52 7.80021920038067e-16
57 4.05304896941857e-16
64 1.77710952178743e-16
71 8.49985975231409e-17
78 4.36177203572242e-17
87 2.01191153826007e-17
97 9.31535497014796e-18
107 4.65539979535635e-18
119 2.19747523241397e-18
132 1.05730179720805e-18
146 5.19424014381204e-19
162 2.49641571487999e-19
180 1.18884934585263e-19
200 5.66404252262364e-20
};
\end{axis}

\end{tikzpicture}
\caption{Visualization of the smallest eigenvalue $\lambda_{\min}$ for the basic Matérn kernel (left) and the linear Matérn kernel (right) and their convolutional kernels $k^*$ over the number of equidistant interpolation points ($x$-axis).
The black dashed lines indicate the lower bounds.}
\label{fig:lambda_min_matern}
\end{figure}
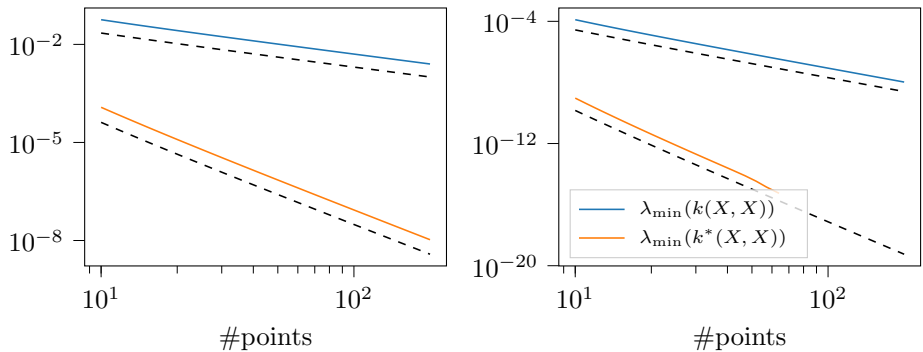

\section{Conclusion and outlook}
\label{sec:outlook}

In this paper we extended upon work from \cite{sivakumar1993least, quak1993least},
focusing on the case of finitely smooth kernels satisfying an algebraic decay of the Fourier transform as in Eq.~\eqref{eq:fourier_decay}.
The first main result \Cref{th:utility_theorem} provides two-sided bounds on the Rayleigh quotient for unsymmetric kernel matrices, 
showing an equivalence to the symmetric kernel matrices for small shifts.
The second main result \Cref{th:main_result} then provides lower bounds on the Rayleigh quotient for convolutional kernels, 
thus enabling stability bounds for convolutional kernels.

For future research, we aim at a better understanding of the relation between the unsymmetric kernel matrix $k(X+b, X)$ to the symmetric kernel matrix $k(X, X)$ for the case of different shifts $b_i$ per point. \\


\textbf{Acknowledgements:}
The work was mainly conducted when the first author was affiliated with the University of Hamburg.
The authors acknowledge financial support through the projects LD-SODA of the {\em Landesforschungsf\"orderung Hamburg} (LFF) and support from the RTG~2583 ``Modeling, Simulation and Optimization of Fluid Dynamic Applications''
funded by the {\em Deutsche Forschungsgemeinschaft} (DFG).

\IfFileExists{/home/wenzel/references.bib}{
\bibliography{/home/wenzel/references}				
}{
\bibliography{/home/math/wenzel/references}			

\begin{thebibliography}{10}

\bibitem{aboiyar2010adaptive}
T.~Aboiyar, E.~H. Georgoulis, and A.~Iske.
\newblock Adaptive {ADER} methods using kernel-based polyharmonic spline {WENO}
  reconstruction.
\newblock {\em SIAM Journal on Scientific Computing}, 32(6):3251--3277, 2010.

\bibitem{buhmann2000radial}
M.~D. Buhmann.
\newblock Radial basis functions.
\newblock {\em Acta numerica}, 9:1--38, 2000.

\bibitem{de2018image}
S.~De~Marchi, A.~Iske, and G.~Santin.
\newblock Image reconstruction from scattered radon data by weighted positive
  definite kernel functions.
\newblock {\em Calcolo}, 55:1--24, 2018.

\bibitem{diederichs2019improved}
B.~Diederichs and A.~Iske.
\newblock Improved estimates for condition numbers of radial basis function
  interpolation matrices.
\newblock {\em Journal of Approximation Theory}, 238:38--51, 2019.

\bibitem{fasshauer2007meshfree}
G.~E. Fasshauer.
\newblock {\em Meshfree {A}pproximation {M}ethods with {MATLAB}}, volume~6.
\newblock World Scientific, 2007.

\bibitem{haas2023mind}
M.~Haas, D.~Holzm\"{u}ller, U.~Luxburg, and I.~Steinwart.
\newblock Mind the spikes: Benign overfitting of kernels and neural networks in
  fixed dimension.
\newblock In A.~Oh, T.~Naumann, A.~Globerson, K.~Saenko, M.~Hardt, and
  S.~Levine, editors, {\em Advances in Neural Information Processing Systems},
  volume~36, pages 20763--20826. Curran Associates, Inc., 2023.

\bibitem{karvonen2025general}
T.~Karvonen, G.~Santin, and T.~Wenzel.
\newblock General superconvergence for kernel-based approximation.
\newblock {\em arXiv preprint arXiv:2505.11435}.

\bibitem{lowitzsch1996l2}
S.~Lowitzsch.
\newblock {\em $L_2$-{A}pproximation mit radialen {B}asisfunktionen}.
\newblock Diplomarbeit, University of Göttingen, 1996.

\bibitem{narcowich2006sobolev}
F.~Narcowich, J.~Ward, and H.~Wendland.
\newblock {S}obolev {E}rror {E}stimates and a {B}ernstein {I}nequality for
  {S}cattered {D}ata {I}nterpolation via {R}adial {B}asis {F}unctions.
\newblock {\em Constructive Approximation}, 24(2):175--186, 2006.

\bibitem{narcowich1994condition}
F.~J. Narcowich, N.~Sivakumar, and J.~D. Ward.
\newblock On condition numbers associated with radial-function interpolation.
\newblock {\em Journal of Mathematical Analysis and Applications},
  186(2):457--485, 1994.

\bibitem{narcowich1991norms}
F.~J. Narcowich and J.~D. Ward.
\newblock Norms of inverses and condition numbers for matrices associated with
  scattered data.
\newblock {\em Journal of Approximation Theory}, 64(1):69--94, 1991.

\bibitem{plonka2018numerical}
G.~Plonka, D.~Potts, G.~Steidl, and M.~Tasche.
\newblock {\em Numerical fourier analysis}.
\newblock Springer, 2018.

\bibitem{quak1993least}
E.~Quak, N.~Sivakumar, and D.~Ward.
\newblock Least squares approximation by radial functions.
\newblock {\em SIAM Journal on Mathematical Analysis}, 24(4):1043--1066, 1993.

\bibitem{schaback1994lower}
R.~Schaback.
\newblock Lower bounds for norms of inverses of interpolation matrices for
  radial basis functions.
\newblock {\em Journal of Approximation Theory}, 79(2):287--306, 1994.

\bibitem{schaback1995error}
R.~Schaback.
\newblock Error estimates and condition numbers for radial basis function
  interpolation.
\newblock {\em Advances in Computational Mathematics}, 3(3):251--264, 1995.

\bibitem{schaback1999improved}
R.~Schaback.
\newblock Improved error bounds for scattered data interpolation by radial
  basis functions.
\newblock {\em Mathematics of Computation}, 68(225):201--216, 1999.

\bibitem{schaback2018superconvergence}
R.~Schaback.
\newblock Superconvergence of kernel-based interpolation.
\newblock {\em Journal of Approximation Theory}, 235:1 -- 19, 2018.

\bibitem{schaback2002inverse}
R.~Schaback and H.~Wendland.
\newblock Inverse and saturation theorems for radial basis function
  interpolation.
\newblock {\em Mathematics of Computation}, 71(238):669--681, 2002.

\bibitem{sivakumar1993least}
N.~Sivakumar and J.~D. Ward.
\newblock On the least squares fit by radial functions to multidimensional
  scattered data.
\newblock {\em Numerische Mathematik}, 65(1):219--243, 1993.

\bibitem{steinwart2012mercer}
I.~Steinwart and C.~Scovel.
\newblock Mercer’s theorem on general domains: On the interaction between
  measures, kernels, and {RKHS}s.
\newblock {\em Constructive Approximation}, 35:363--417, 2012.

\bibitem{wendland1999meshless}
H.~Wendland.
\newblock Meshless galerkin methods using radial basis functions.
\newblock {\em Mathematics of computation}, 68(228):1521--1531, 1999.

\bibitem{wendland2005scattered}
H.~Wendland.
\newblock {\em Scattered {D}ata {A}pproximation}, volume~17 of {\em Cambridge
  Monographs on Applied and Computational Mathematics}.
\newblock Cambridge University Press, Cambridge, 2005.

\bibitem{wenzel2025sharp}
T.~Wenzel.
\newblock Sharp inverse statements for kernel interpolation.
\newblock {\em Mathematics of Computation}, 2025.

\bibitem{wenzel2024novel}
T.~Wenzel and A.~Iske.
\newblock Novel and refined stability estimates for kernel matrices.
\newblock {\em arXiv preprint arXiv:2409.04263, version 1}, 2024.
\newblock Initial version submitted to SIAM Journal on Matrix Analysis and
  Applications.

\bibitem{wenzel2025spectral}
T.~Wenzel and A.~Iske.
\newblock Spectral alignment of kernel matrices and applications.
\newblock {\em arXiv preprint arXiv:2409.04263, version 2}, 2025.
\newblock Final version accepted for publication in SIAM Journal on Matrix
  Analysis and Applications.

\end{thebibliography}
}
\bibliographystyle{abbrv}

\appendix

\section{Details on proof of \Cref{prop:estimate_integral_sinus_improved}}
\label{sec:outsourced_proof}

\begin{proof}
We split the integral of interest as $\int_{\R^d} = \int_{B_{2R}} + \int_{\R^d \setminus B_{2R}}$ and estimate separately:
\begin{itemize}
\item For the integral $\int_{B_{2R}}$, 
we start by estimating $\sin \left( \frac{\omega^\top b}{2} \right)^2 \leq \frac{\Vert b \Vert^2}{4} \cdot \Vert \omega \Vert^2$.
We keep the prefactor $\frac{\Vert b \Vert^2}{4}$ in mind and continue simply with $\Vert \omega \Vert^2$:
For a value $r \in (0, 2R)$ we thus consider
\begin{align*}
&~ \int_{B_{2R}} \Vert \omega \Vert^2 \cdot (1 + \Vert \omega \Vert^2)^{-\tau} \cdot \left| \sum_{j=1}^n \alpha_j e^{i \omega^\top x_j} \right|^2 ~ \mathrm{d}\omega \\
=& ~ \int_{B_r} \Vert \omega \Vert^2 \cdot (1 + \Vert \omega \Vert^2)^{-\tau} \cdot \left| \sum_{j=1}^n \alpha_j e^{i \omega^\top x_j} \right|^2 ~ \mathrm{d}\omega \\
&\qquad
+ \int_{B_{2R} \setminus B_r} \Vert \omega \Vert^2 \cdot (1 + \Vert \omega \Vert^2)^{-\tau} \cdot \left| \sum_{j=1}^n \alpha_j e^{i \omega^\top x_j} \right|^2 ~ \mathrm{d}\omega \\
\leq& ~ r^2 \cdot \int_{B_r} (1 + \Vert \omega \Vert^2)^{-\tau} \cdot \left| \sum_{j=1}^n \alpha_j e^{i \omega^\top x_j} \right|^2 ~ \mathrm{d}\omega \\ 
&\qquad
+ \max_{\tilde{r} \in [r, 2R]} \frac{\tilde{r}^2}{(1+\tilde{r}^2)^\tau} \cdot \int_{B_{2R} \setminus B_r} \left| \sum_{j=1}^n \alpha_j e^{i \omega^\top x_j} \right|^2 ~ \mathrm{d}\omega \\
\leq& ~ r^2 \cdot \int_{B_{2R}} (1 + \Vert \omega \Vert^2)^{-\tau} \cdot \left| \sum_{j=1}^n \alpha_j e^{i \omega^\top x_j} \right|^2 ~ \mathrm{d}\omega \\
&\qquad
+ \max_{\tilde{r} \in [r, 2R]} \frac{\tilde{r}^2}{\tilde{r}^{2\tau}} \cdot \int_{B_{2R}} \left| \sum_{j=1}^n \alpha_j e^{i \omega^\top x_j} \right|^2 ~ \mathrm{d}\omega \\
\leq& ~ r^2 \cdot \int_{B_{2R}} (1 + \Vert \omega \Vert^2)^{-\tau} \cdot \left| \sum_{j=1}^n \alpha_j e^{i \omega^\top x_j} \right|^2 ~ \mathrm{d}\omega 
+ r^{-2(\tau-1)} \cdot c_2 (2R)^d \Vert \alpha \Vert^2 \\
=:&~ r^2 \Psi_1 + r^{-2(\tau - 1)} \Psi_2.
\end{align*}
For the second part we used $\max_{\tilde{r} \in [r, 2R]} \tilde{r}^{2(1-\tau)} = r^{-2(\tau-1)}$ (recall $\tau > 1)$ as well as \cite[Theorem 3]{wenzel2025spectral} 
in the last inequality.
Note that for applying \cite[Theorem 3]{wenzel2025spectral}, $2R$ needs to be large enough, namely $2R \geq \frac{c_0}{q_X}$. 

This final expression can now be minimized in $r \in (0, 2R)$. 
A standard optima calculation yields the choice $r^* := \left( \frac{ (\tau-1) \Psi_2 }{\Psi_1} \right)^{1/(2\tau)} \in (0, 2R)$.
Plugging this value $r^*$ into aboves inequality gives
\begin{align*}
&~ \int_{B_{2R}} \Vert \omega \Vert^2 \cdot (1 + \Vert \omega \Vert^2)^{-\tau} \cdot \left| \sum_{j=1}^n \alpha_j e^{i \omega^\top x_j} \right|^2 ~ \mathrm{d}\omega \\
&= (r^*)^2 \Psi_1 + (r^*)^{-2(\tau - 1)} \Psi_2 \\
&= \left( \frac{ (\tau-1) \Psi_2 }{\Psi_1} \right)^{1/\tau} \Psi_1 
+ \left( \frac{\Psi_1}{(\tau-1) \Psi_2 } \right)^{1-1/\tau} \Psi_2 \\
&= (\tau-1)^{1/\tau} \Psi_2^{1/\tau} \Psi_1^{1-1/\tau} 
+ \Psi_1^{1-1/\tau} \Psi_2^{1/\tau} (\tau-1)^{1/\tau-1} \\
&= \Psi_1^{1-1/\tau} \Psi_2^{1/\tau} \cdot \left( (\tau-1)^{1/\tau} + (\tau-1)^{1/\tau-1} \right) \\
&\leq 2 \Psi_1^{1-1/\tau} \Psi_2^{1/\tau}.
\end{align*}
The final inequality comes from upper bounding the $\tau$ dependent term. 
\item The integral $\int_{\R^d \setminus B_{2R}}$ can be estimated straightforward by using $\sin^2 \leq 1$ as
\begin{align*}
&~ \int_{\R^d \setminus B_{2R}} \sin \left( \frac{\omega^\top b}{2} \right)^2 \cdot (1 + \Vert \omega \Vert^2)^{-\tau} \cdot \left| \sum_{j=1}^n \alpha_j e^{i \omega^\top x_j} \right|^2 ~ \mathrm{d}\omega \\
\leq&~ \int_{\R^d \setminus B_{2R}} (1 + \Vert \omega \Vert^2)^{-\tau} \cdot \left| \sum_{j=1}^n \alpha_j e^{i \omega^\top x_j} \right|^2 ~ \mathrm{d}\omega \\
\leq&~ \varepsilon (2\pi)^{d/2} c_{\min} q_X^{2\tau-d} \cdot \Vert \alpha \Vert^2.
\end{align*}
The final inequality comes from the penultimate inequality in \cite[Proof of Theorem 6]{wenzel2024novel}. 
\end{itemize}
Plugging both estimates together (recalling the factor $\frac{\Vert b \Vert^2}{4}$ of the first term) yields
\begin{align*}
&~\int_{\R^d} (1+\Vert \omega \Vert^2)^{-\tau} \cdot \left| \sum_{j=1}^n \alpha_j e^{i \omega^\top x_j}  \right|^2 \cdot \sin \left( \frac{\omega^\top b}{2} \right)^2 ~ \mathrm{d}\omega \\
\leq&~ \frac{\Vert b \Vert^2}{2} \Psi_1^{1-1/\tau} \Psi_2^{1/\tau} + \varepsilon (2\pi)^{d/2} c_{\min} q_X^{2\tau-d} \cdot \Vert \alpha \Vert^2 \\ 
\leq&~ \left( \frac{\Vert b \Vert^2}{2} \left( \frac{\alpha^\top A \alpha}{\Vert \alpha \Vert^2} \right)^{1-1/\tau} \left( c_2 (2R)^d \right)^{1/\tau} + \varepsilon (2\pi)^{d/2} c_{\min} q_X^{2\tau-d} \right) \cdot \Vert \alpha \Vert^2 \\
=&~ \left( \frac{c_2^{1/\tau} \Vert b \Vert^2}{2}  \left( \frac{\alpha^\top A \alpha}{\Vert \alpha \Vert^2} \right)^{1-1/\tau} (2R)^{d/\tau} + \varepsilon (2\pi)^{d/2} (c_{\min} q_X^{2\tau-d})^{1-1/\tau} (c_{\min} q_X^{2\tau-d})^{1/\tau} \right) \cdot \Vert \alpha \Vert^2 \\
\leq&~ \left( \frac{c_2^{1/\tau} c_0^{d/\tau} \Vert b \Vert^2}{2} \left( \frac{\alpha^\top A \alpha}{\Vert \alpha \Vert^2} \right)^{1-1/\tau} q_X^{-d/\tau} + \varepsilon (2\pi)^{d/2} \left( \frac{\alpha^\top A \alpha}{\Vert \alpha \Vert^2} \right)^{1-1/\tau} c_{\min}^{1/\tau} q_X^{2-d/\tau} \right) \cdot \Vert \alpha \Vert^2 \\
=&~ \left( \frac{c_2^{1/\tau} c_0^{d/\tau} \Vert b \Vert^2}{2 q_X^2} + \varepsilon (2\pi)^{d/2} c_{\min}^{1/\tau} \right) \left( \frac{\alpha^\top A \alpha}{\Vert \alpha \Vert^2} \right)^{1-1/\tau} q_X^{2-d/\tau} \cdot \Vert \alpha \Vert^2.
\end{align*}
Using $\Vert b \Vert \leq \left( \frac{2(2\pi)^{d/2} c_{\min}^{1/\tau}}{c_2^{1/\tau} c_0^{d/\tau}} \right)^{1/2} \varepsilon^{1/2} \cdot q_X =: c_\varepsilon \cdot q_X$,
the prefactor can be estimated with $2\varepsilon (2\pi)^{d/2} c_{\min}^{1/\tau}$.
\end{proof}

%


\section{Symmetric kernel part (see Eq.~\eqref{eq:intermediate_eq3})}
\label{sec:symm_kernel_part}

We denote 
\begin{align*}
k(X+b, X)_+ := \frac{k(X+b, X) + k(X+b, X)^\top}{2}
\end{align*}
using $k(X+b, X)_{ij} = k(x_i + b, x_j)$.
Then we compute
\begin{align*}
&~ (2\pi)^{d/2} \cdot \langle k(X+b, X)_+ \alpha, \alpha \rangle \\
=& ~ \frac{(2\pi)^{d/2}}{2} \cdot \left( \sum_{i,j=1}^n \alpha_i \alpha_j k(x_i + b, x_j) + \sum_{i,j=1}^n \alpha_i \alpha_j k(x_i, x_j+b) \right) \\
=& ~ \frac{1}{2} \cdot \int_{\R^d} \hat{\Phi}(\omega) \cdot \sum_{i,j=1}^n \alpha_i \alpha_j  \left( e^{i(x_i + b - x_j)\omega} + e^{(x_i - x_j - b)\omega} \right) ~ \mathrm{d}\omega \\
=& ~ \frac{1}{2} \cdot \int_{\R^d} \hat{\Phi}(\omega) \cdot \sum_{i,j=1}^n \alpha_i \alpha_j e^{i\omega^\top(x_i - x_j)} \left( e^{i\omega^\top b} + e^{- i \omega^\top b} \right) ~ \mathrm{d}\omega \\
=& ~ \int_{\R^d} \hat{\Phi}(\omega) \cdot \sum_{i,j=1}^n \alpha_i \alpha_j e^{i\omega^\top (x_i - x_j)} ~ \mathrm{d}\omega \\
&\quad - \frac{1}{2} \cdot \int_{\R^d} \hat{\Phi}(\omega) \cdot \sum_{i,j=1}^n \alpha_i \alpha_j e^{i\omega^\top (x_i - x_j)} \left( 1 - e^{i\omega^\top b} + 1 - e^{- i \omega^\top b} \right) ~ \mathrm{d}\omega \\
%
=& ~ \int_{\R^d} \hat{\Phi}(\omega) \cdot \left| \sum_{i,j=1}^n \alpha_i e^{i\omega^\top x_i} \right|^2 ~ \mathrm{d}\omega \\
&\quad - \frac{1}{2} \cdot \int_{\R^d} \hat{\Phi}(\omega) \cdot \sum_{i,j=1}^n \alpha_i \alpha_j e^{i \omega^\top (x_i - x_j)} \left( (1 - e^{i\omega^\top b})(1 - e^{- i \omega^\top b}) \right) ~ \mathrm{d}\omega \\
=& ~ \int_{\R^d} \hat{\Phi}(\omega) \cdot \left| \sum_{i=1}^n \alpha_i e^{i\omega^\top x_i} \right|^2 ~ \mathrm{d}\omega - \frac{1}{2} \cdot \int_{\R^d} \hat{\Phi}(\omega) \cdot \left| \sum_{i=1}^n \alpha_i e^{i\omega^\top x_i} (1 - e^{i\omega^\top b}) \right|^2 ~ \mathrm{d}\omega \\
=& ~ \int_{\R^d} \hat{\Phi}(\omega) \cdot \left| \sum_{i=1}^n \alpha_i e^{i\omega^\top x_i} \right|^2 ~ \mathrm{d}\omega \notag - 2 \int_{\R^d} \hat{\Phi}(\omega) \cdot \left| \sum_{i=1}^n \alpha_i e^{i\omega^\top x_i} \right|^2 \sin \left( \frac{\omega^\top b}{2} \right)^2  ~ \mathrm{d}\omega
\end{align*}

\end{document}